\definecolor{codegreen}{rgb}{0,0.4,0}
\definecolor{codegray}{rgb}{0.5,0.5,0.5}
\definecolor{codepurple}{rgb}{0.58,0,0.12}
\definecolor{backcolour}{rgb}{1.0,0.99,0.95}
\definecolor{deepblue}{rgb}{0.,0.1,0.8}
\definecolor{deepred}{rgb}{0.6,0,0}
\definecolor{deepgreen}{rgb}{0,0.5,0}
\lstdefinestyle{mystyle}{
    backgroundcolor=\color{backcolour},   
    commentstyle=\color{codegreen},
    keywordstyle=\color{deepblue},
    numberstyle=\tiny\color{codegray},
    stringstyle=\color{codepurple},
    breakatwhitespace=false,         
    breaklines=true,                 
    captionpos=b,                    
    keepspaces=true,                 
    showspaces=false,                
    showstringspaces=false,
    showtabs=false,                  
    tabsize=2,
    aboveskip=1em,
    belowskip=1em
}
\newtheorem{theorem}{Theorem}[section]
\newtheorem{corollary}[theorem]{Corollary}
\newtheorem{lemma}[theorem]{Lemma}
\newtheorem{proposition}[theorem]{Proposition}
\theoremstyle{definition}
\newtheorem{remark}[theorem]{Remark}
\newtheorem*{openproblem}{Open Problem}
\newcommand{\code}[1]{\lstinline$#1$}
\newcommand{\quflow}{\texttt{\small quflow}}
\newcommand{\edit}[1]{#1}
\title[Introduction to matrix hydrodynamics]
{A brief introduction to matrix hydrodynamics} 
\author[Klas Modin and Milo Viviani]{}
\subjclass{35Q31, 53D50, 76M60, 76B47, 53D25}
\keywords{Matrix hydrodynamics, two-dimensional turbulence, Euler equations, geometric hydrodynamics, quantization, Zeitlin's model}
\thanks{The authors would like to thank all the participants of the mini-workshop  ``Geometric and Stochastic Methods for Fluid Models'', Kristineberg, Sweden, October 2024.}
\thanks{$^*$Corresponding author: Klas Modin}
\begin{document}
\maketitle

\centerline{\scshape
Klas Modin$^{{\href{mailto:klas.modin@chalmers.se}{\textrm{\Letter}}}*1}$
and Milo Viviani$^{{\href{mailto:milo.viviani@sns.it}{\textrm{\Letter}}}2}$}

\medskip

{\footnotesize
 \centerline{$^1$Mathematical Sciences, Chalmers and University of Gothenburg, Sweden}
} 

\medskip

{\footnotesize
 \centerline{$^2$Scuola Normale Superiore Pisa, Italy}
}

\bigskip


 \centerline{\textit{To the memory of Vladimir Zeitlin}}


\begin{abstract}
This paper gives a basic demonstration of matrix hydrodynamics; the field pioneered by V.~Zeitlin, where 2-D incompressible fluids are spatially discretized via quantization theory.
\end{abstract}


\section{Introduction}
In October 2024, we organized the mini-workshop ``Geometric and Stochastic Methods for Fluid Models'' at Kristineberg Station, one of the oldest marine biology stations in the world, situated in the western archipelago of Sweden.
Participants with mixed mathematical backgrounds united
in a zest for \emph{matrix hydrodynamics}, a field that began in the 1990s with the work of Vladimir Zeitlin, meteorologist and mathematician, who untimely passed away five months prior to the workshop.
This survey is dedicated to his legacy.

Zeitlin~\cite{Ze1991} took \emph{quantization theory}, developed for theoretical physics, and turned it into a numerical tool for spatial discretization of the 2-D Euler equations.
As a result, he found a finite-dimensional, isospectral matrix flow that captures the rich geometric structure of the 2-D Euler equations (Lie--Poisson structure, co-adjoint orbits, and Casimir functions, as surveyed below).
His first method was developed for the flat torus and became known as the ``sine-bracket approximation''.

Perhaps due to its unusual approach, Zeitlin's work quickly caught the eye of the numerics community.
McLachlan~\cite{Mc1993}, for example, found a suitable Lie--Poisson preserving time integration scheme based on splitting.
At large, however, numerical PDE analysts at the time concentrated on high-order methods, and in this respect, Zeitlin's discretization is mediocre (it has order $1/2$), so the interest faded.
Criticism also came from the fluid mechanics community, arguing that numerical methods for 2-D Euler \emph{should not} preserve enstrophy (one of the Casimirs) because enstrophy disperse into finer and finer scales, and this process cannot be resolved by any discretization.
Finally, quantization on the flat torus has a problem: the translational symmetry is spoiled, \edit{and spurious diffraction patterns appear}.
Zeitlin~\cite{Ze2004} later extended his method to the sphere, where quantization preserves the rotational symmetry, but at the time, 
the corresponding quantized Laplace--Beltrami operator could not be efficiently inverted, which blocked numerical simulations.

The objections above are well grounded.
Yet, in the last couple of years, Zeitlin's approach has seen a renaissance, with a renewed interest from both mathematicians and physicists and a growing community of devotees.
But what is behind this change in attitude?


One reason is that focus now lies on the direct matrix formulation, instead of the truncated Fourier coordinates in the sine-bracket formulation.
In turn, the matrix setting enables highly optimized numerical linear algebra routines, such as threaded BLAS\footnote{See \href{https://netlib.org/blas/}{netlib.org/blas/}.} algorithms for matrix-matrix multiplication.
The emphasis on matrix formulations also warrants ``matrix hydrodynamics'' as the collective name for quantization-based discretizations of hydrodynamic-like equations.

\edit{Another reason for the upswing is the gradual insight that efforts into high-order spatial discretizations are wasted when it comes to long-time simulations of the 2-D Euler equations: the local truncation error of high-order methods is governed by $C^k$- norms, and they typically grow faster than exponential (there are solutions where $C^1$ grows double exponential~\cite{KiSv2014}), so high order is of little use.}
Instead, it is better to focus on discretizations that capture the underlying geometric structure of the equations, like Zeitlin's method.

This point of view is strengthened by the physicists' statistical hydrodynamics approach.
Indeed, Onsager~\cite{On1949} worked out that statistical mechanics applied to the 2-D Euler equations predicts large-scale vortex condensation, such as observed in both experiments and real atmospheric flows (cf.~\cite{MaWa2006, BoEc2012, BoVe2012}).
Yet, statistical mechanics completely ignores the local dynamics; it is solely based on the geometric structure in phase space, namely the phase flow symplecticity and the conservation laws.
Matrix hydrodynamics is thus a bridge between statistical mechanics, based on phase space geometry, and traditional numerical PDE methods, based on locally tracking the dynamics.
A demonstrating example is the statistically predicted Kraichnan spectrum for the inverse energy cascade in 2-D hydrodynamics~\cite{Kr1967}, which is qualitatively captured with Zeitlin's method, almost independently of the matrix size, but which traditional methods fail to capture even at significantly higher resolutions~\cite{CiViLuMoGe2022}.

Finally, and most far-reaching, matrix hydrodynamics may itself give new mathematical tools for the long-time behavior of 2-D hydrodynamics.
An example is the \emph{canonical scale separation}~\cite{MoVi2022}, which
is based on Lie group theory enabled via the matrix formulation.
In the spirit of this example, matrix hydrodynamics is a device to distinguish finite-dimensional traits of the dynamics from exclusively infinite-dimensional traits.
For example, simulations with Zeitlin's model show that the inverse energy cascade is structurally stable under discretization as long as the underlying geometry is kept.
As another example, Lie--Poisson discretizations seem to qualitatively capture the predicted enstrophy distribution over length scales, albeit quantitatively exaggerated due to the finite-dimensionality---a new perspective on the above-mentioned criticism about conserving enstrophy (cf.~Section~\ref{sec:noise} below).

This survey is a hands-on introduction to matrix hydrodynamics.
It requires little more than linear algebra and vector calculus.
The focus is on Zeitlin's model on the sphere, including a brief but complete illustration of how to carry out simulations with the Python code \quflow .
The wider, ongoing applications of matrix hydrodynamics are outlined in Section~\ref{sec:topics}.
For a more in-depth introduction, including quantization theory and numerical convergence, we refer to another publication of ours~\cite{MoVi2024}.
\edit{The results in the paper are mostly known, except those in Section~\ref{sec:noise}.}

\section{Geometric hydrodynamics}

Let $M$ be a Riemannian manifold comprising the vessel of an incompressible, inviscid fluid.
Its motion is described by a time-dependent vector field $\boldsymbol{v}(\boldsymbol{x},t)$ on $M$ fulfilling Euler's~\cite{Eu1757} equations
\begin{equation}\label{eq:general_euler_equations}
    \dot{\boldsymbol{v}} + \nabla_{\boldsymbol{v}} \boldsymbol{v} = -\nabla p, \qquad \operatorname{div}\boldsymbol{v} = 0,
\end{equation}
where $\dot{\boldsymbol{v}}$ denotes the time derivative, 
the function $p$ is the pressure, and $\nabla_{\boldsymbol{v}}$ denotes the co-variant derivative in the direction of $\boldsymbol{v}$ (i.e., the natural generalization of the directional derivative, from functions to vector fields).

Geometric hydrodynamics began with Arnold's~\cite{Ar1966} discovery that Euler's equations~\eqref{eq:general_euler_equations} describe a geodesic motion on the group $\mathrm{SDiff}(M)$ of volume-preserving diffeomorphisms.
The key is to think of $\mathrm{SDiff}(M)$ as an infinite-dimensional Lie group.
For $\Phi\in \mathrm{SDiff}(M)$, the tangent space $T_\Phi\mathrm{SDiff}(M)$ then consists of maps of the form $\boldsymbol{u}\circ\Phi$ where $\boldsymbol{u}$ is a divergence-free vector field on $M$.
A Riemannian metric on $\mathrm{SDiff}(M)$ is obtained by assigning to tangent vectors $\boldsymbol{u}\circ\Phi$ and $\boldsymbol{v}\circ\Phi$ the right-invariant $L^2$ inner product
\begin{equation}\label{eq:euler_L2_metric}
    \langle \boldsymbol{u}\circ\Phi,\boldsymbol{v}\circ\Phi\rangle_{\Phi} = \int_M \boldsymbol{u}\cdot \boldsymbol{v} \, .
\end{equation}
Arnold realized that if $\boldsymbol{v}(x,t)$ is a solution to Euler's equations~\eqref{eq:general_euler_equations}, then the corresponding curve of diffeomorphisms, obtained by solving the non-autonomous ordinary differential equation
\begin{equation*}
    \dot \Phi = \boldsymbol{v}\circ\Phi ,\qquad (\;\text{i.e.,}\; \frac{\partial}{\partial t}\Phi(\boldsymbol{x}_0,t) = \boldsymbol{v}\big(\Phi(\boldsymbol{x}_0,t), t\big)\;),
\end{equation*}
is \emph{geodesic} (locally length minimizing) for the metric~\eqref{eq:euler_L2_metric}.
This Riemannian interpretation of incompressible hydrodynamics is more than a cute observation.
It is a powerful tool that yields insights to the dynamics.
For example, based on the geodesic formulation, Ebin and Marsden~\cite{EbMa1970} gave improved existence and uniqueness results.
Arnold himself made a calculation of sectional curvatures, showing that it is mostly negative, which in turn has implications for the stability of fluids and for the non-existence of accurate long-time weather predictions~\cite[appendix~2]{Ar1989}.
Many authors followed up on the curvature calculations, with further insights and results.

Although geometric hydrodynamics was invented for the Euler equations~\eqref{eq:general_euler_equations}, it is also, in its abstract form, applicable to a range of other equations (see Arnold and Khesin~\cite{ArKh2021} and references therein).
The general framework goes as follows.
If $G$ is a Lie group equipped with a right-invariant metric $\langle\cdot,\cdot\rangle_g$ such that the length of a tangent vector $\dot g = \xi g \in T_gG$ is
\begin{equation*}
    \langle \dot g,\dot g \rangle_g = \langle \dot g g^{-1}, \dot g g^{-1} \rangle_e = \langle \xi, \xi \rangle_e
\end{equation*}
then, in terms of the \emph{momentum variable} $\mu = \langle\xi,\cdot\rangle_e$, the geodesic equation is
\begin{equation}\label{eq:euler_arnold}
    \dot \mu + \mathrm{ad}^*_\xi \mu = 0,
\end{equation}
where $\mathrm{ad}^*_\xi\colon \mathfrak{g}^*\to\mathfrak{g}^*$ is the infinitesimal adjoint action of the Lie algebra $\mathfrak{g}=T_eG$ on its dual $\mathfrak{g}^*$.
The equation \eqref{eq:euler_arnold} is the \emph{Euler--Arnold equation} for the Lie algebra~$\mathfrak{g}$ equipped with the inner product $\langle \cdot,\cdot\rangle_e$.
\edit{It has a Hamiltonian structure as a Lie--Poisson system on the dual Lie algebra $\mathfrak{g}^*$ (\emph{cf.}~Marsden and Ratiu~\cite{MaRa1999}).}

Let us now derive these equations for the specific Lie algebra $\mathfrak{g} = C^\infty_0(S^2)$ of smooth, zero-mean functions on the sphere equipped with the Poisson bracket
\begin{equation}\label{eq:poisson_bracket_S2}
\{ \psi,\xi\}(\boldsymbol{x}) = \boldsymbol{x}\cdot(\nabla\psi\times \nabla\xi)
\end{equation}
for $\boldsymbol{x} \in S^2 \subset \mathbb{R}^3$.
The Hamiltonian vector field corresponding to $\psi$ is then given by the {skew gradient} $\nabla^\bot\psi = -\boldsymbol{x}\times\nabla\psi$, defined by $\nabla^\bot\psi\cdot\nabla\xi = \{\xi,\psi\}$.

As the inner product on the Lie algebra, we take the Dirichlet energy
\begin{equation}\label{eq:Dirichlet_energy}
    \langle \psi,\psi\rangle_e = \int_{S^2} \left|\nabla\psi\right|^2 .
\end{equation}
The corresponding momentum variable $\omega = \langle\psi,\cdot\rangle_e$ is naturally, via the $L^2$-pairing, represented as the function $\omega = -\Delta\psi$ for the Laplace-Beltrami operator $\Delta$ on $S^2$.
Indeed, from the divergence theorem
\begin{equation*}
    \langle\psi,\xi\rangle_e = \int_{S^2} \nabla\psi\cdot\nabla\xi = \int_{S^2}(-\operatorname{div}\nabla\psi)\xi =
    \int_{S^2}(\underbrace{-\Delta\psi}_{\omega})\xi \; .
\end{equation*}

It remains to work out the adjoint action operator $\mathrm{ad}_\psi^*$.
By definition,
\begin{equation*}
    \int_{S^2} (\mathrm{ad}_\psi^*\omega)\xi = \int_{S^2} \omega \{\psi,\xi \}.
\end{equation*}
Standard vector calculus identities then yield
\begin{multline*}
    \int_{S^2} \omega \{\psi,\xi \} = \int_{S^2} \omega \nabla\xi\cdot (\boldsymbol{x}\times\nabla\psi) = -\int_{S^2} \xi\operatorname{div}(\omega \boldsymbol{x}\times\nabla\psi) = \\  -\int_{S^2}\xi \nabla\omega\cdot (\boldsymbol{x}\times\nabla\psi) = -\int_{S^2}\xi \{\psi,\omega \}.
\end{multline*}
Thus, $\mathrm{ad}^*_\psi\omega = \{\omega,\psi\}$.\footnote{The calculation also shows that the $L^2$ inner product is \emph{bi-invariant}, namely $\langle \omega,\{\psi,\xi\}\rangle_{L^2} = \langle -\{\psi,\omega\},\xi\rangle_{L^2}$, which means that $L^2$ is canonical with respect to the Lie algebra structure. In particular, the geodesics generated by this inner product coincides with the group exponential.}
In summary, we have proved the following result.

\begin{proposition}
    The Euler--Arnold equation for the Lie algebra {\small $\mathfrak g = (C^\infty_0(S^2),\{\cdot,\cdot\})$} equipped with the Dirichlet energy inner product \eqref{eq:Dirichlet_energy} is
    \begin{equation}\label{eq:vorticity_equation}
        \dot\omega + \{\omega,\psi\} = 0, \qquad -\Delta\psi = \omega .
    \end{equation}
\end{proposition}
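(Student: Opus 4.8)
The plan is to specialize the abstract Euler--Arnold equation \eqref{eq:euler_arnold} to the case at hand, which amounts to two tasks: identifying the momentum variable $\mu$ associated to the Dirichlet inner product \eqref{eq:Dirichlet_energy}, and computing the coadjoint operator $\mathrm{ad}^*_\psi$ for the Poisson bracket \eqref{eq:poisson_bracket_S2}. Both computations are carried out in the discussion preceding the statement, so the proof is essentially an assembly of those pieces; I record here the logical order in which I would present them.

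First I would fix the dual pairing. Although the metric on $\mathfrak g$ is the Dirichlet energy, it is convenient --- and, on $S^2$, legitimate --- to realize $\mathfrak g^* \cong \mathfrak g$ through the $L^2$ pairing rather than through the metric itself. With this convention the momentum variable $\mu = \langle\psi,\cdot\rangle_e$ is represented by the function $\omega$ for which $\int_{S^2}\omega\,\xi = \int_{S^2}\nabla\psi\cdot\nabla\xi$ for all $\xi\in\mathfrak g$; integrating by parts via the divergence theorem gives $\omega = -\Delta\psi$. Note that $\omega$ automatically has zero mean, so it lies in $C^\infty_0(S^2)$, and conversely $-\Delta$ is invertible on zero-mean functions on the sphere, so the correspondence $\psi\leftrightarrow\omega$ is a bijection; this is what makes $-\Delta\psi = \omega$ a genuine constraint rather than a mere abbreviation.

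Next I would compute $\mathrm{ad}^*_\psi\omega$ from its defining relation $\int_{S^2}(\mathrm{ad}^*_\psi\omega)\,\xi = \int_{S^2}\omega\,\{\psi,\xi\}$. Writing $\{\psi,\xi\} = \nabla\xi\cdot(\boldsymbol{x}\times\nabla\psi)$ and moving the derivative off $\xi$ with the divergence theorem, the crux is the vector-calculus identity $\operatorname{div}(\omega\,\boldsymbol{x}\times\nabla\psi) = \nabla\omega\cdot(\boldsymbol{x}\times\nabla\psi)$, which in turn rests on $\operatorname{div}(\boldsymbol{x}\times\nabla\psi) = 0$ on $S^2$ (equivalently, that $\nabla^\bot\psi$ is divergence-free). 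This yields $\int_{S^2}(\mathrm{ad}^*_\psi\omega)\,\xi = -\int_{S^2}\xi\,\{\psi,\omega\}$, hence $\mathrm{ad}^*_\psi\omega = \{\omega,\psi\}$.

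Finally, substituting $\mu = \omega$ and $\mathrm{ad}^*_\psi\omega = \{\omega,\psi\}$ into \eqref{eq:euler_arnold} gives $\dot\omega + \{\omega,\psi\} = 0$, and appending the constitutive relation $-\Delta\psi = \omega$ completes the derivation. The only genuinely non-formal point is the identification of the dual: one must verify that $-\Delta$ maps $C^\infty_0(S^2)$ bijectively onto itself, so that $\psi$ can always be recovered from $\omega$. Everything else is the symmetric manipulation of the Poisson bracket already displayed above, so I expect the main place where care is needed to be the bookkeeping of \emph{which} pairing defines $\mathfrak g^*$, since it is easy to conflate the Dirichlet pairing with the $L^2$ pairing.
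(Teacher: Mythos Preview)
Your proposal is correct and follows essentially the same approach as the paper: the proposition is stated in the paper as a summary of the preceding discussion, which identifies $\omega=-\Delta\psi$ via the $L^2$ pairing and the divergence theorem, then computes $\mathrm{ad}^*_\psi\omega=\{\omega,\psi\}$ by moving the derivative off $\xi$ exactly as you describe. Your added remarks on the bijectivity of $-\Delta$ on $C^\infty_0(S^2)$ and on keeping the $L^2$ versus Dirichlet pairings straight are welcome clarifications, but do not change the argument.
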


Next, we work out the connection to the Euler equations~\eqref{eq:general_euler_equations}.
The curl (or skew-divergence) of a vector field $\boldsymbol{v}$ on $S^2$ is given by
\begin{equation*}
    \operatorname{curl}\boldsymbol{v} = \operatorname{div}(\boldsymbol{x}\times \boldsymbol{v}).
\end{equation*}
Applying this operator to equation~\eqref{eq:general_euler_equations} yields
\begin{equation*}
    \operatorname{curl}\dot{\boldsymbol{v}} + \operatorname{curl}\nabla_{\boldsymbol{v}}\boldsymbol{v} = -\underbrace{\operatorname{curl}\nabla p}_{=0}.
\end{equation*}
Furthermore,
\begin{equation*}
    \operatorname{curl}\nabla_{\boldsymbol{v}}\boldsymbol{v} = \boldsymbol{v}\cdot \nabla\operatorname{curl}\boldsymbol{v}
\end{equation*}
which we leave as an exercise to prove.
Consequently,
\begin{equation*}
    \operatorname{curl}\dot{\boldsymbol{v}} + \boldsymbol{v}\cdot \nabla\operatorname{curl}\boldsymbol{v} = 0,
\end{equation*}
so $\operatorname{curl}\boldsymbol{v}$ is advected by the vector field $\boldsymbol{v}$.

Since the vector field $\boldsymbol{v}$ is divergence-free, and thus symplectic with respect to the area form, and since the first co-homology of the sphere is trivial (it lacks holes), we obtain that $\boldsymbol{v} = \nabla^\bot\psi$ for some Hamiltonian $\psi$, which is unique up to a constant (in particular, it is unique in $C^\infty_0(S^2)$).
Since $\operatorname{curl}\nabla^\bot\psi = \operatorname{div}(\boldsymbol{x}\times (-\boldsymbol{x}\times\nabla \psi)) = -\Delta \psi$ and since $\{\omega,\psi\} = \nabla^\bot\psi\cdot\nabla\omega$, we obtain that $\omega = \operatorname{curl}\boldsymbol{v}$ fulfills the Euler--Arnold equation~\eqref{eq:vorticity_equation}.
This is the \emph{vorticity formulation} of the 2-D Euler equations, and, in summary, we have the following result.

\begin{corollary}
    The Euler equations~\eqref{eq:general_euler_equations} on the domain $M=S^2$ are equivalent to the Euler--Arnold equation~\eqref{eq:vorticity_equation}.
    The vorticity function, corresponding to the momentum variable, is given by $\omega = \operatorname{curl}\boldsymbol{v}$, whereas the stream function $\psi$ is the Hamiltonian for the vector field, so that $\boldsymbol{v} = \nabla^\bot\psi$.
\end{corollary}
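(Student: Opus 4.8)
The argument is, in essence, the computation carried out in the paragraphs preceding the statement; the plan is to organize it into the two implications and to fill the two small gaps (existence of the stream function, recovery of the pressure). For the forward direction, start from a solution $\boldsymbol v$ of \eqref{eq:general_euler_equations} on $M = S^2$ and apply $\operatorname{curl}\boldsymbol v = \operatorname{div}(\boldsymbol x\times\boldsymbol v)$ to both sides. The pressure term drops out because the skew-gradient of any function is divergence free, so $\operatorname{curl}\nabla p = \operatorname{div}(\boldsymbol x\times\nabla p) = 0$. The key — and the step I expect to be the main obstacle — is the convection identity $\operatorname{curl}\nabla_{\boldsymbol v}\boldsymbol v = \boldsymbol v\cdot\nabla\operatorname{curl}\boldsymbol v$ that the text leaves as an exercise; on a curved surface its verification needs some care with the covariant derivative, most cleanly via the surface analogue of the Lamb identity expressing $\nabla_{\boldsymbol v}\boldsymbol v$ through $\tfrac12\nabla|\boldsymbol v|^2$ and the vorticity, combined with the product rule $\operatorname{div}(f\boldsymbol v) = \nabla f\cdot\boldsymbol v + f\operatorname{div}\boldsymbol v$ and $\operatorname{div}\boldsymbol v = 0$. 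Granting it, we obtain $\operatorname{curl}\dot{\boldsymbol v} + \boldsymbol v\cdot\nabla\operatorname{curl}\boldsymbol v = 0$, so $\omega := \operatorname{curl}\boldsymbol v$ is transported along $\boldsymbol v$.

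Next I would introduce the stream function. Since $\operatorname{div}\boldsymbol v = 0$, the flux $1$-form $\iota_{\boldsymbol v}\mathrm{vol}$ is closed on $S^2$, and since $H^1(S^2) = 0$ it is exact; this produces $\boldsymbol v = \nabla^\bot\psi$ with $\psi$ unique up to an additive constant, hence unique in $C^\infty_0(S^2)$. A direct computation gives $\operatorname{curl}\nabla^\bot\psi = \operatorname{div}\big(\boldsymbol x\times(-\boldsymbol x\times\nabla\psi)\big) = -\Delta\psi$, so $\omega = -\Delta\psi$, while $\boldsymbol v\cdot\nabla\omega = \nabla^\bot\psi\cdot\nabla\omega = \{\omega,\psi\}$ by the definition \eqref{eq:poisson_bracket_S2} of the bracket. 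Substituting into the transport equation turns it into $\dot\omega + \{\omega,\psi\} = 0$ with $-\Delta\psi = \omega$, i.e.\ \eqref{eq:vorticity_equation}; along the way we have identified $\omega$ with the vorticity and $\psi$ with the Hamiltonian, $\boldsymbol v = \nabla^\bot\psi$, as asserted in the statement.

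For the converse, given a solution $(\omega,\psi)$ of \eqref{eq:vorticity_equation} set $\boldsymbol v := \nabla^\bot\psi$; then $\operatorname{div}\boldsymbol v = 0$ automatically and $\operatorname{curl}\boldsymbol v = -\Delta\psi = \omega$. Reading the computation of the first paragraph backwards gives $\operatorname{curl}(\dot{\boldsymbol v} + \nabla_{\boldsymbol v}\boldsymbol v) = \dot\omega + \{\omega,\psi\} = 0$, and a curl-free vector field on $S^2$ is a gradient, once more by $H^1(S^2) = 0$ together with the Hodge decomposition, so $\dot{\boldsymbol v} + \nabla_{\boldsymbol v}\boldsymbol v = -\nabla p$ for some $p$, unique up to a constant. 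This is \eqref{eq:general_euler_equations}, which closes the equivalence. Apart from the curl--convection identity flagged above, every step is routine manipulation with the definitions already set up and with the vanishing of the first cohomology of the sphere.
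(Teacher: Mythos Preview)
Your proposal is correct and follows essentially the same route as the paper: the paper's ``proof'' is precisely the discussion in the paragraphs preceding the Corollary, namely taking the curl of \eqref{eq:general_euler_equations}, invoking the convection identity $\operatorname{curl}\nabla_{\boldsymbol v}\boldsymbol v = \boldsymbol v\cdot\nabla\operatorname{curl}\boldsymbol v$ (which the paper likewise leaves as an exercise), and then using $H^1(S^2)=0$ to produce the stream function and identify the transport equation with \eqref{eq:vorticity_equation}. You in fact go slightly beyond the paper by writing out the converse direction explicitly---recovering the pressure via the Hodge decomposition from $\operatorname{curl}(\dot{\boldsymbol v}+\nabla_{\boldsymbol v}\boldsymbol v)=0$---which the paper's text leaves implicit in the word ``equivalent''.
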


\section{The Euler--Zeitlin equations}

Zeitlin~\cite{Ze1991} had the brilliant idea to use quantization theory for spatial discretization of the vorticity form~\eqref{eq:vorticity_equation} of the 2-D Euler equations.
The aim of quantization theory is to find a mapping between smooth functions and Hermitian linear operators on a Hilbert space such that the Poisson bracket between functions corresponds, up to scaling by $\mathrm{i}/\hbar$, to the commutator between operators.
If the underlying domain is compact, then the Hilbert space can be finite-dimensional, so the operators are Hermitian matrices.
In the context of matrix hydrodynamics, we work with skew-Hermitian matrices $\mathfrak{u}(N)$ instead (multiplication by the imaginary unit $\mathrm{i}$ provides an isomorphism).

For the Poisson bracket~\eqref{eq:poisson_bracket_S2} on the sphere, we thus seek a linear mapping $$C^\infty(S^2) \ni \psi\mapsto T_N\psi \in \mathfrak{u}(N)$$ such that the image of the Poisson bracket $T_N\{\psi,\omega\}$ is approximated by the scaled commutator $\frac{1}{\hbar}[T_N\psi,T_N\omega]$ for $\hbar = 2/\sqrt{N^2-1}$.
Of course, this is trivially achieved by taking $T_N \equiv 0$, so we also require a consistency condition, typically $T_N 1 = -\mathrm{i} I$ (the constant function 1 is mapped to $-\mathrm{i}$ times the identity matrix) and $\lVert T^*_N T_N\psi-\psi \rVert_{L^2}\to 0$ as $N\to \infty$, where $T_N^*\colon \mathfrak{u}(N)\to C^\infty(S^2)$ denotes the $L^2$ adjoint of $T_N$.
Such a quantization was given by Berezin~\cite{Be1975} and by Hoppe~\cite{Ho1989}.
More generally, \emph{Berezin--Toeplitz quantization} is applicable to any  quantizable symplectic manifold (see the monograph by Le Floch~\cite{Le2018} for details).
Sharp convergence results were given by Charles and Polterovich~\cite{ChPo2018}.

For the sphere, quantization is best understood via representation theory.
Indeed, from a unitary $\mathfrak{so}(3)$ representation, generated by $S_1,S_2,S_3\in \mathfrak{u}(N)$, we obtain the \emph{Casimir operator}.
\begin{equation}\label{eq:hoppe_yau_laplacian}
    \Delta_N \colon \mathfrak{u}(N)\to \mathfrak{su}(N), \quad
    \Delta_N P = \sum_{\alpha=1}^3 [[P,S_\alpha],S_\alpha].
\end{equation}
As pointed out by Hoppe and Yau~\cite{HoYa1998}, this operator gives an approximation of the Laplace-Beltrami operator on $S^2$, with the same spectrum up to truncation at the maximum spherical harmonics wave number $\ell = N-1$.
For details on the representation point of view of quantization, see Modin and Viviani~\cite[sect.~3]{MoVi2024}.

We now have all the ingredients to approximate the 2-D Euler equations \eqref{eq:vorticity_equation} via quantization theory.
Indeed, the \emph{Euler--Zeitlin equation} is the isospectral flow of matrices on $\mathfrak{su}(N)$ given by
\begin{equation}\label{eq:euler-zeitlin}
    \dot W + \frac{1}{\hbar}[W, P] = 0, \qquad -\Delta_N P = W,
\end{equation}
where $\hbar = 2/\sqrt{N^2-1}$.
\edit{These equations constitute themselves an Euler--Arnold system on the Lie algebra $(\mathfrak{su}(N),\frac{1}{\hbar}[\cdot,\cdot])$ equipped with the quantized Dirichlet energy inner product
\begin{equation*}
    \langle P,P\rangle_I = \frac{4\pi}{N} \operatorname{tr}(P^\dagger (-\Delta_N)P),
\end{equation*}
analogous to the inner product~\eqref{eq:Dirichlet_energy}.
In particular, the system preserves the Lie--Poisson structure on $\mathfrak{su}(N)^*\simeq \mathfrak{su}(N)$ and the corresponding Hamiltonian function
\begin{equation*}
    H(W) = \frac{1}{2}\langle P,P\rangle_I = \frac{2\pi}{N}\operatorname{tr}(W^\dagger P) .
\end{equation*}
Under the adjoint quantization mapping $T_N^*$ as above, it gives an $\mathcal{O}(N^{-1})$ truncation of the Euler equations~\eqref{eq:vorticity_equation}.
}

The isospectral property of the flow~\eqref{eq:euler-zeitlin} reflects the advection property of the vorticity equation~\eqref{eq:vorticity_equation}.
It implies conservation of \emph{Casimir functions}
\[
    C_f^N(W) = \frac{4\pi}{N}\operatorname{tr}(f(\mathrm i W)),
\]
for any smooth function $f\colon \mathbb{R}\to\mathbb{R}$.
These converge to the corresponding continuous Casimirs $C_f(\omega) = \int_{S^2} f(\omega)$ as $N\to \infty$ \cite{MoVi2024}.
However, the key point of matrix hydrodynamics is \emph{not} the conservation of Casimirs.
Rather, the conservation laws stem from a deeper analog: just as the level sets of the continuous vorticity field $\omega$ are advected, the eigenvectors $\boldsymbol{e}_i$ of the vorticity matrix $W$ are ``advected" as
\begin{equation}\label{eq:advection_eigenvectors}
    \dot{\boldsymbol{e}}_i = \frac{1}{\hbar}P\boldsymbol{e}_i,
\end{equation}
while the corresponding eigenvalues $-\mathrm{i}\lambda_i$ of $W$ are preserved.\footnote{These equations allow for a ``transport formulation" of the Euler--Zeitlin equations, which can be used as a basis for time discretization \cite{CiMoPaVi2025_preprint}.}
This is readily seen since $W = \sum_{k=1}^N -\mathrm i\lambda_k \boldsymbol{e}_k\boldsymbol{e}_k^\dagger$ so the discrete advection equations \eqref{eq:advection_eigenvectors} yield
\begin{equation*}
    \dot W = \sum_{k=1}^N \frac{-\mathrm i \lambda_k}{\hbar} \big(\boldsymbol{e}_k (P \boldsymbol{e}_k)^\dagger + (P\boldsymbol{e}_k)\boldsymbol{e}_k^\dagger\big)
    = - \frac{1}{\hbar}[W,P].
\end{equation*}

See Figure~\ref{fig:advection} for an illustration of the correspondences
$$
\text{level sets} \leftrightarrow \text{eigenvectors}
\qquad \text{and}\qquad \text{vorticity values} \leftrightarrow
\text{eigenvalues}.
$$
These correspondences are at the heart of matrix hydrodynamics.
They enable numerical studies of the \emph{structural stability} of  2-D Euler equations~\eqref{eq:vorticity_equation},
namely, to study if long-time, qualitative properties of its solutions are stable under perturbations of the underlying infinite-dimensional Lie-Poisson structure (to the finite-dimensional Lie-Poisson structure for the Euler--Zeitlin equations).

\begin{figure}
    \includegraphics{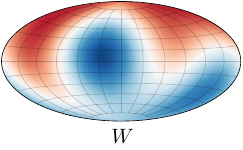}
    \includegraphics{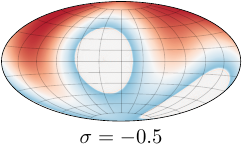}
    \includegraphics{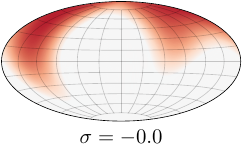}\\[2ex]
    \includegraphics{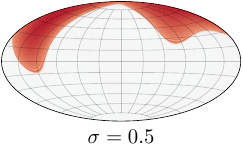}
    \includegraphics{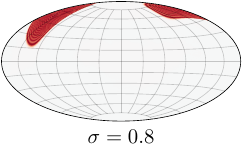}
    \includegraphics{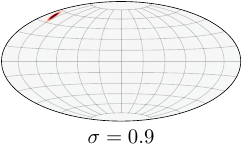}
    \caption{A skew-Hermitian matrix $W\in\mathfrak{su}(512)$, with eigenvalues $-\mathrm i\lambda_1,\ldots,-\mathrm i\lambda_{512}$ such that $-1\leq \lambda_m\leq 1$ and corresponding eigenvectors $\boldsymbol{e}_m$, is partially reconstructed by $\sum_{\mathrm i\lambda_m \geq \sigma} -\mathrm i\lambda_m \boldsymbol{e}_m\boldsymbol{e}_m^\dagger$ for $\sigma \in [-1,1]$.
    As seen in the figure, this decomposition corresponds to nullifying the level sets of the vorticity function with values below $\sigma$.
    The sphere is visualized via the area-preserving Hammer projection.
    }
    \label{fig:advection}
\end{figure}

\edit{

\subsection{Conservation of angular momentum}

In addition to Casimirs, the 2-D Euler equations on the sphere conserve angular momenta $$l_\alpha(\omega) \coloneqq \int_{S^2} x_\alpha \omega, \quad \alpha=1,2,3.$$
Geometrically, these three conservation laws arise from Noether's theorem due to the invariance of the Hamiltonian under the rotation group $\mathrm{SO}(3)$.
Zeitlin's model on $S^2$ inherits the same symmetry, since it stems from representations of $\mathrm{SO}(3)$, and therefore we obtain analog conservation laws.
As above, let $S_1,S_2,S_3$ denote the generators for the infinitesimal action of $\mathfrak{so}(3)$ on $\mathfrak{u}(N)$.

\begin{proposition}
    The Euler--Zeitlin equations \eqref{eq:euler-zeitlin} conserve the angular momenta given by
    \begin{equation*}
        L_\alpha(W) \coloneqq \frac{4\pi}{N}\operatorname{tr}(S_\alpha^\dagger W),\quad \alpha=1,2,3  .
    \end{equation*}
\end{proposition}

\begin{proof}
    We have
    \begin{multline*}
        \frac{d}{dt}L_\alpha(W) = -\frac{4\pi}{N} \operatorname{tr}(S_\alpha \dot W) = \frac{4\pi}{N\hbar}\operatorname{tr}(S_\alpha [\Delta_N P,P]) = \\ -\frac{1}{\hbar}\operatorname{tr}([\Delta_N P,S_\alpha] P) = -\frac{4\pi}{N\hbar}\operatorname{tr}([P,S_\alpha] \Delta_N P) = -\frac{4\pi}{N\hbar}\operatorname{tr}(S_\alpha [\Delta_N P,P]).
    \end{multline*}
    Thus, $\frac{d}{dt}L_\alpha(W) = - \frac{d}{dt}L_\alpha(W)$ so $\frac{d}{dt}L_\alpha(W) = 0$.
\end{proof}

}


\subsection{The isospectral midpoint method}
To simulate the Euler--Zeitlin equations~\eqref{eq:euler-zeitlin}, we need a suitable time integration method.
Ideally, to be useful for structural stability studies, the method should preserve the finite-dimensional Lie--Poisson structure of the Euler--Zeitlin equations.
In particular, it should preserve the isospectral property of the flow.\footnote{Notice, however, that preserving the Lie--Poisson structure is a much stronger condition than preserving isospectrality. Geometrically, isospectrality means that the motion remains on a single \emph{co-adjoint orbit}, whereas Lie--Poisson means that the flow is symplectic on that orbit.}
Such an integration scheme is given by the \emph{isospectral midpoint method}~\cite{MoVi2020c}, which is the mapping $W_{n}\mapsto W_{n+1}$ defined via an intermediate stage $\tilde{W}$ by
\begin{equation}\label{eq:isospectral_midpoint}
    \begin{aligned}
        &W_n = \tilde{W} - \frac{\varepsilon}{2}[\tilde{W}, \tilde{P}] - \frac{\varepsilon^2}{4}\tilde{P}\tilde{W}\tilde{P}, \qquad
        -\Delta_N \tilde{P} = \tilde{W}, \\
        &W_{n+1} = W_n - \varepsilon [\tilde{W},\tilde{P}],
    \end{aligned}
\end{equation}
where $\varepsilon = \delta t/\hbar$ for a chosen time step $\delta t$.
An efficient implementation of the scheme~\eqref{eq:isospectral_midpoint} is given by Cifani, Viviani, and Modin~\cite{CiViMo2023}.
\edit{It uses fixed-point iterations, resulting in $\mathcal{O}(N^3)$ complexity per time step; the quantized Laplacian $\Delta_N$ is inverted in $\mathcal{O}(N^2)$ operations, but matrix-matrix multiplication effectively requires $\mathcal{O}(N^3)$ operations.}

In addition to preserving the Lie--Poisson structure, the isospectral midpoint method is reversible with respect to $P\mapsto -P$.
Further, it is second-order accurate in time.\footnote{It is impossible to accurately track exact solutions for more than short time intervals, regardless of the order of the method.
However, higher-order time discretizations are still useful, for example, to more accurately conserve energy or in the study of stationary solutions~\cite{SiViLe2025_preprint}.
}

\section{A numerical illustration}\label{sec:illustration}

In this section, we demonstrate the Python package \quflow\footnote{Available at \href{https://github.com/klasmodin/quflow/}{github.com/klasmodin/quflow/}. It is installed via the \texttt{\footnotesize pip} package manager.} for simulating the 2-D Euler equations~\eqref{eq:vorticity_equation} via the Euler-Zeitlin equations~\eqref{eq:euler-zeitlin} discretized in time by the isospectral midpoint method~\eqref{eq:isospectral_midpoint}.
The first step is to load the package and specify the initial vorticity field by spherical harmonics coefficients.

\begin{lstlisting}
import numpy as np
import quflow as qf

# Initial conditions as random real spherical harmonics
# truncated at wave-number 'elmax'
elmax = 20
omega0 = np.random.randn((elmax+1)**2)

# Set vanishing mean
omega0[0] = 0.0

# Set vanishing total angular momentum
omega0[1:4] = 0.0

# Create corresponding initial matrix with size 'N'
N = 512
W0 = qf.shr2mat(omega0, N)

# Plot the initial vorticity field
qf.plot(omega0, colorbar=True)
\end{lstlisting}
The call to \code{qf.plot} displays the initial vorticity field, as seen in Figure~\ref{fig:initial_vorticity}.

\begin{figure}
    \includegraphics{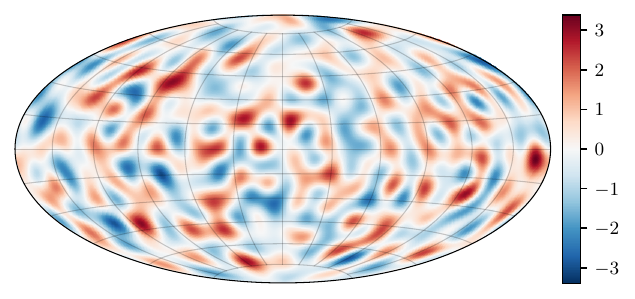}
    \caption{Initial vorticity field, generated as a truncated spherical harmonic series $\sum_{\ell=0}^{\ell_{\it max}}\sum_{m=-\ell}^\ell \omega_{\ell,m}Y_{\ell,m}$, where $\ell_{\it max}=20$ and the coefficients $\omega_{\ell,m}$ are normally distributed.
    } \label{fig:initial_vorticity}
\end{figure}

Next, the output data file is initialized, and the solver parameters are specified.

\begin{lstlisting}
# Create simulation data file
sim = qf.QuSimulation("quflow_sim.hdf5", state=W0, overwrite=True)

# Set simulation parameters
sim['dt'] = 0.2*qf.hbar(N)
sim['simtime'] = 100.0
sim['dt_out'] = 0.2
\end{lstlisting}
Notice that we have specified the time step length \code{dt} in units of $\hbar$.
This way, the time step scales correctly when we change the matrix size~$N$.

The final step is to run the simulation, i.e., to carry out time integration via the isospectral midpoint method~\eqref{eq:isospectral_midpoint}.
\begin{lstlisting}
# Run the simulation
qf.solve(sim)
\end{lstlisting}

Once the simulation is finished, the results are stored in the \code{hdf5} file format.
The default setting stores both function values on a grid in spherical coordinates (in the \code{hdf5}-field \code{'fun'}) and the corresponding $\mathfrak{su}(N)$ matrices (in the \code{hdf5}-field \code{'mat'}).
For example, the following code displays the last output and extracts the corresponding matrix.
\begin{lstlisting}
# Load simulation results
sim = qf.QuSimulation("quflow_sim.hdf5")

# Plot the last output
qf.plot(sim['fun', -1])

# Extract matrix of the last output
W = sim['mat', -1]
\end{lstlisting}

Snapshots of the simulated vorticity evolution are given in Figure~\ref{fig:vorticity_evolution}.
The initial vorticity field undergoes intricate mixing, where nearby vorticity regions of equal sign continue to merge until only a few ``vortex blobs'' remain.
Typically, the merging stops at 4 blobs if the total angular momentum is zero, corresponding to the integrable motion of 4 point vortices with vanishing angular momentum, and at 3 blobs for non-vanishing total angular momentum, corresponding to integrable motion of 3 point vortices.
More details on this mechanism, as well as further numerical experiments, are given by the authors in a prior publication~\cite{MoVi2020}.


\begin{figure}
    \includegraphics{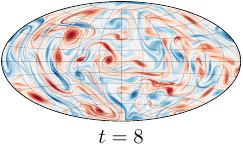}
    \includegraphics{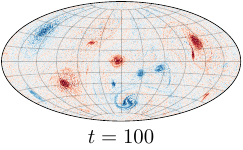}
    \includegraphics{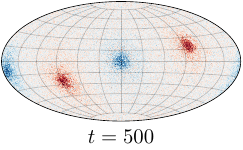}
    \caption{Evolution of the vorticity field for the \quflow\ simulation in Section~\ref{sec:illustration}, with initial data as in Figure~\ref{fig:initial_vorticity}.
    Vorticity regions of equal sign undergo mixing until four ``vortex blob condensates'' remain: two positive and two negative. After that, the large-scale motion stabilizes in quasi-periodic interaction between the blobs.}
    \label{fig:vorticity_evolution}
\end{figure}

\section{Conservation of enstrophy implies noise}\label{sec:noise}

Consider the expansion of vorticity in spherical harmonics
\begin{equation*}
    \omega = \sum_{\ell=1}^\infty\sum_{m=-\ell}^\ell \omega_{\ell,m}Y^{\ell,m},
\end{equation*}
where $\ell$ corresponds to the wave number frequency ($V^\ell = \mathrm{span}(Y^{\ell,-\ell},\ldots,Y^{\ell,\ell})$ is the eigenspace of the Laplace-Beltrami operator with eigenvalue $-\ell(\ell+1)$).
The long-time behavior of the 2-D Euler equations can be addressed by studying the statistical properties of the coefficients $\omega_{\ell,m}$.
Indeed, \edit{we expect a threshold wave-number $\ell^*$ such that}
for low wave numbers, $\ell < \ell^*$, we see a build-up of energy (cf.~\emph{inverse energy cascade}), whereas for high wave numbers, $\ell > \ell^*$, we see convergence toward a uniform distribution of the spherical harmonics coefficients (cf.~\emph{forward enstrophy cascade}).
In other words, for $\ell>\ell^*$, we expect, in average, that $\omega_{\ell,m}^2 \sim \varepsilon^2$ with an infinitesimal noise level $\varepsilon^2$ as $t\to\infty$.
Details of the underlying theoretical models, as well as numerical and experimental verification, are covered in the survey by Boffetta and Ecke~\cite{BoEc2012}.

As we have seen, the long-time simulations of the Euler-Zeitlin equations~\eqref{eq:euler-zeitlin} typically settle at a few interacting vortex blobs for the large-scale dynamics.
On small scales, however, there is a background ``noise'', moderate but significant in magnitude.
This noise is qualitatively correct, but quantitatively incorrect: its magnitude is exaggerated.

\begin{figure}
    \includegraphics{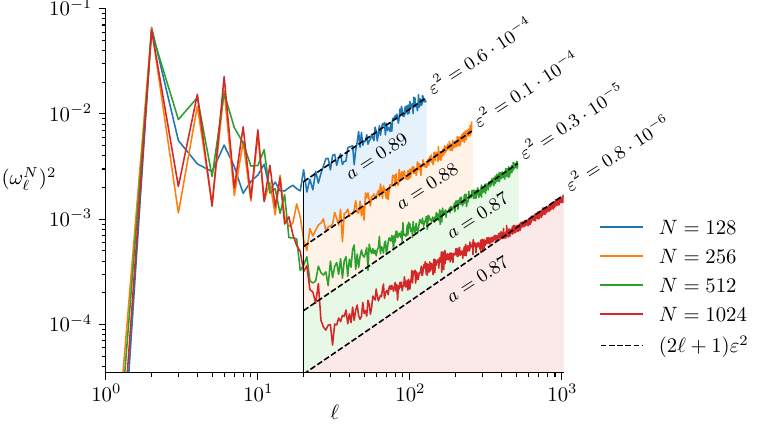}
    \caption{
        Visualization of the long-time spectral enstrophy components $(\omega_\ell^N)^2 = \sum_{m=-\ell}^\ell (\omega_{\ell,m}^N)^2$ for varying matrix sizes $N$.
        The initial data is the same as in Figure~\ref{fig:initial_vorticity}.
        On small enough scales, \edit{empirically found to be} $\ell > \ell^*\approx 20$, enstrophy gets uniformly distributed among the spherical harmonics coefficients $(\omega_{\ell,m}^N)^2$, such that $(\omega_\ell^N)^2 \approx (2\ell+1)\varepsilon^2$.
        For a smaller $N$, there is less available volume in phase space to distribute enstrophy, resulting in a larger background noise $\varepsilon^2$.
        The fact that the areas $a$ under the graphs are largely independent of~$N$ confirms this discussion.
    }
    \label{fig:spectrum}
\end{figure}

The magnification of the background noise is a consequence of conservation of enstrophy, i.e., conservation of the Casimir
\begin{equation*}
    C_2 = \int_{S^2} \omega^2 = \sum_{\ell=1}^\infty\sum_{m=-\ell}^\ell \omega_{\ell,m}^2 .
\end{equation*}
Indeed, for the Zeitlin model, we have
\begin{equation*}
    W = \sum_{\ell=1}^{N-1}\sum_{m=-\ell}^\ell \omega_{\ell,m}^N T_N^{\ell,m},
\end{equation*}
where $T_N^{\ell,m}$ are the eigenmatrices of the Hoppe--Yau Laplacian $\Delta_N$ directly corresponding to the spherical harmonics $Y^{\ell,m}$.
By introducing $(\omega_\ell^N)^2 = \sum_{m=-\ell}^\ell \omega_{\ell,m}^N$, and under the assumption $N>\ell^*$, we decompose the enstrophy as
\begin{equation*}
    C_2 = \sum_{\ell=1}^{\ell^*-1} (\omega_\ell^N)^2 + \underbrace{\sum_{\ell=\ell^*}^{N-1} (\omega_\ell^N)^2}_{a}.
\end{equation*}
In the long-time states, the large-scale blobs are accounted for in the first part of the sum.
It is natural to assume that this large-scale part is mostly unaffected by $N$, as long as $N$ is large enough in comparison to $\ell^*$ (e.g., at least twice as large).
Consequently, since enstrophy is conserved, $a$ is an adiabatic invariant largely independent of $N$.
If we furthermore assume that the Zeitlin model produces the qualitatively correct statistics, so that the small-scale states $(\omega_{\ell,m}^N)^2$ independently average to $\varepsilon^2$, we get that $(\omega_\ell^N)^2 \approx (2\ell+1)\varepsilon^2$.
These assumptions allow us to express $\varepsilon^2$ in terms of $a$ and $N$:
\begin{equation*}
    a = \sum_{\ell=\ell^*}^{N-1} (\omega_\ell^N)^2 \approx \varepsilon^2\sum_{\ell=\ell^*}^{N-1}(2\ell+1) \iff \varepsilon^2 \approx \frac{a}{\sum_{\ell=\ell^*}^{N-1}(2\ell+1)}.
\end{equation*}
In particular, we expect that the noise level $\varepsilon^2$ decreases as $\mathcal{O}(N^{-2})$ as $N\to\infty$, whereas $a$ remains largely independent of $N$.
Figure~\ref{fig:spectrum} confirms this behavior.

That enstrophy conservation in a numerical method for the 2-D Euler equations implies excessive background noise is an argument against conservation of enstrophy (and, more specifically, against conservation of co-adjoint orbits).
Indeed, it is common to introduce ``numerical viscosity'' to model the dispersion of entrophy into small scales (see, e.g., the study by Dritschel, Qi, and Marston~\cite{DrQiMa2015}).
However, the mechanism of enstrophy dispersion (forward cascade and mixing) in the 2-D Euler equations is distinct from the mechanism of viscosity or other types of damping.
Indeed, numerical viscosity tends to destroy the theoretically predicted properties of the long-time behavior, as such behavior often hinges on the advection of vorticity and thus on the conservation of co-adjoint orbits.
For example, the theoretically predicted Kraichnan spectrum for forced 2-D turbulence is captured in the Zeitlin model, chiefly independently of $N$, whereas traditional methods (with numerical viscosity) fail to capture it even at significantly higher resolutions~\cite{CiViLuMoGe2022}.
In our experience, for numerical studies of the long-time behavior, a viable strategy is to use Zeitlin-based discretizations and then carefully taking the background noise into account for the conclusions (the noise can be estimated via the model above).
Such simulations then capture the long-time behavior contingent on vorticity advection as long as that behavior is stable under amplified background noise.


\begin{figure}
    \includegraphics{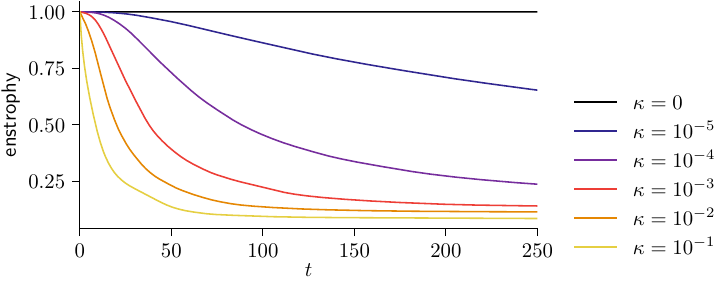}
    \caption{
        \edit{
        Evolution of enstrophy for canonical dissipation with different values of $\kappa$.
        The initial data is the same as in Figure~\ref{fig:initial_vorticity}, and the matrix size is $N=512$.
        The enstrophy is a convex Casimir and therefore decays, as predicted by Proposition~\ref{prop:dissipative_casimirs}.
        }
    }
    \label{fig:enstrophy_dissipation}
\end{figure}

\subsection{Canonical dissipation to reduce the noise}
There are situations where the noise induced by conservation of enstrophy is undesirable.
For example, we might want to compare the outcome of a structure-preserving, noisy simulation with one where enstrophy dissipates.
In such situations, one can introduce \emph{canonical dissipation}\footnote{This type of dissipation was introduced by Sadourny and Basdevant~\cite{SaBa1985} under the name ``anticipated vorticity method''.
It later led to a general method for dissipating Casimirs called ``selective decay'' \cite{GaHo2013}, which also fits into the setting of metriplectic dissipation (cf.~\cite{BrKrMaMo2025_preprint}).
However, both selective decay and metriplectic dissipation are general frameworks that rely on additional structure.
In contrast, the special case \eqref{eq:canonical dissipation} considered here is defined solely from the Hamiltonian and the Poisson bracket, and therefore we call it ``canonical dissipation''.} to the Euler--Zeitlin equations~\eqref{eq:euler-zeitlin}, namely
\begin{equation}\label{eq:canonical dissipation}
    \dot W + \frac{1}{\hbar}[W, P] = \frac{\kappa}{\hbar^2}[[W, P], P], \qquad -\Delta_N P = W,
\end{equation}
where $\kappa \geq 0$ is the canonical dissipation parameter (usually small).
The force is dissipative, and in this sense, similar to viscosity in the Navier--Stokes equations.
Unlike viscosity, however, it is non linear and preserves the energy.

\begin{proposition}\label{prop:energy_conservation_candis}
    The energy 
    \begin{equation*}
        H = \frac{2\pi}{N}\operatorname{tr}(W^\dagger P)
    \end{equation*}
    is conserved by the Euler--Zeitlin equations with canonical dissipation~\eqref{eq:canonical dissipation}.
\end{proposition}

\begin{proof}
Direct calculations give
\begin{multline*}
    \frac{d}{dt}\frac{2\pi}{N}\operatorname{tr}(PW) = \frac{4\pi}{N}\operatorname{tr}\big(P(\frac{1}{\hbar}[P,W] + \frac{\kappa}{\hbar^2}[P,[P,W]])\big)
    = \\  \frac{4\pi\kappa}{N\hbar^2}\operatorname{tr}(-\underbrace{[P,P]}_0[P,W]) = 0.
\end{multline*} \vspace{-8pt}
\end{proof}

On the other hand, the enstrophy decays with canonical dissipation, as seen in Figure~\ref{fig:enstrophy_dissipation}.
More generally, any convex Casimir decays.

\begin{proposition}\label{prop:dissipative_casimirs}
    Let $f\colon \mathbb{R}\to \mathbb{R}$ be a convex function.
    Then the corresponding Casimir
    \begin{equation*}
        C_f^N(W) = \frac{4\pi}{N}\operatorname{tr}(f(\mathrm i W))
    \end{equation*}
    decays along solutions of the Euler--Zeitlin equations with canonical dissipation~\eqref{eq:canonical dissipation}.
\end{proposition}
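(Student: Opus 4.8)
The plan is to pass to the Hermitian matrix $A = \mathrm i W$, for which $C_f^N(W) = \frac{4\pi}{N}\operatorname{tr} f(A)$, and to differentiate $\operatorname{tr} f(A)$ along the flow. Writing $W = -\mathrm i A$ in \eqref{eq:hyperviscosity}, a one-line computation gives
\begin{equation*}
\dot A = -\frac{1}{\hbar}[A,P] + \frac{\nu}{\hbar^2}[P,[P,A]].
\end{equation*}
Assuming for the moment that $f\in C^1$, the chain rule for spectral functions yields $\frac{d}{dt}\operatorname{tr} f(A) = \operatorname{tr}\big(f'(A)\dot A\big)$, so that
\begin{equation*}
\frac{N}{4\pi}\,\dot C_f^N(W) = -\frac{1}{\hbar}\operatorname{tr}\big(f'(A)[A,P]\big) + \frac{\nu}{\hbar^2}\operatorname{tr}\big(f'(A)[P,[P,A]]\big).
\end{equation*}
The first term is $0$: since $A$ commutes with $f'(A)$, cyclicity of the trace gives $\operatorname{tr}(f'(A)AP) = \operatorname{tr}(f'(A)PA)$. (This recovers the fact, stated earlier, that the Hamiltonian part of the flow conserves all Casimirs; hence the monotonicity must come entirely from the hyperviscous term.)

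For the hyperviscous term I would diagonalize $A = \sum_k a_k\, \boldsymbol e_k\boldsymbol e_k^\dagger$ with real eigenvalues $a_k$ and an orthonormal eigenbasis (possible degeneracies cause no trouble), and set $P_{jk} = \boldsymbol e_j^\dagger P\boldsymbol e_k$. Skew-Hermiticity of $P$ gives $P_{kj} = -\overline{P_{jk}}$, hence $P_{jk}P_{kj} = -|P_{jk}|^2$. Computing the diagonal entries of the double commutator in this basis, $[P,[P,A]]_{jj} = 2\sum_k (a_j - a_k)P_{jk}P_{kj}$, one obtains
\begin{equation*}
\operatorname{tr}\big(f'(A)[P,[P,A]]\big) = -2\sum_{j,k} f'(a_j)\,(a_j - a_k)\,|P_{jk}|^2 = -\sum_{j,k}\big(f'(a_j) - f'(a_k)\big)(a_j - a_k)\,|P_{jk}|^2,
\end{equation*}
the last equality by symmetrizing in $j\leftrightarrow k$. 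Convexity of $f$ makes $f'$ nondecreasing, so every summand $(f'(a_j) - f'(a_k))(a_j - a_k)$ is $\geq 0$; together with $\nu\geq 0$ this gives $\operatorname{tr}(f'(A)[P,[P,A]]) \leq 0$, and therefore $\dot C_f^N(W)\leq 0$.

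The one genuine subtlety is the regularity of $f$: a merely convex function need not be $C^1$, so the spectral chain rule above is not literally available. I would remove this hypothesis by mollification: $f_\eta = f * \rho_\eta$ is smooth and convex and converges to $f$ locally uniformly, the eigenvalues of $A(t)$ stay in a fixed compact interval (they are frozen by the Hamiltonian part and move only under the dissipative term), so $C_{f_\eta}^N(W(t)) \to C_f^N(W(t))$ uniformly on any bounded time interval; since each $C_{f_\eta}^N(W(t))$ is nonincreasing by the above, so is the limit. Beyond this, the argument is routine; the only place to be careful is the sign, which rests entirely on the identity $P_{jk}P_{kj} = -|P_{jk}|^2$ coming from $P\in\mathfrak{su}(N)$ being skew-Hermitian, and on reading convexity of $f$ as monotonicity of $f'$.
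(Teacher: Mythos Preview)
Your argument is correct and follows the same core strategy as the paper: kill the Hamiltonian contribution by commutativity, diagonalize, and read off nonnegativity from convexity. The organizational differences are minor but worth noting. The paper packages the diagonalization as a separate lemma, indexing by diagonal bands $Y_{m\backslash i}$ of $F^\dagger P F$, whereas you compute the diagonal entries of the double commutator directly with plain $(j,k)$ indexing; your route is a bit more self-contained. On regularity you are actually more careful than the paper: the paper finishes via the mean value theorem, writing $f'(w_{i+|m|})-f'(w_i)=f''(\xi_{im})(w_{i+|m|}-w_i)$ and invoking $f''\ge 0$, which tacitly needs $f\in C^2$, while you use only monotonicity of $f'$ and then remove the $C^1$ hypothesis by mollification. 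One small quibble: your parenthetical that the eigenvalues ``are frozen by the Hamiltonian part and move only under the dissipative term'' is true but unnecessary for the mollification step---continuity of $t\mapsto A(t)$ on a compact interval already bounds the spectrum.
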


The proof relies on the following result.

\begin{lemma}\label{lem:eigendiff}
    Let $W,W'\in\mathfrak{u}(N)$ with $[W,W'] = 0$ and denote by $\mathrm iw_1,\ldots,\mathrm iw_N$ and $\mathrm iw'_1,\ldots,\mathrm i w'_N$ their eigenvalues, ordered with respect to a common eigenbasis $F\in U(N)$.
    Then, for any $X \in \mathfrak{u}(N)$,
    \begin{equation*}
        \operatorname{tr}([X,W]^\dagger[X,W']) = \sum_{m=-N}^N \sum_{i=1}^{n-|m|}|Y_{m\backslash i}|^2(w'_{i+|m|}-w'_{i})(w_{i+|m|}-w_{i}),
    \end{equation*}
    where $Y = F^\dagger X F$ and $Y_{m \backslash i}$ denotes the $i$:th element along the $m$:th diagonal of $Y$.
\end{lemma}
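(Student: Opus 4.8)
The plan is to exploit the hypothesis $[W,W']=0$ to diagonalize $W$ and $W'$ simultaneously and then carry out the whole computation in coordinates. Since commuting skew-Hermitian matrices are simultaneously unitarily diagonalizable, I would take the common eigenbasis $F\in U(N)$ so that $F^\dagger W F = D := \operatorname{diag}(\mathrm i w_1,\dots,\mathrm i w_N)$ and $F^\dagger W' F = D' := \operatorname{diag}(\mathrm i w'_1,\dots,\mathrm i w'_N)$. Conjugation by $F$ preserves traces and commutators, so with $Y := F^\dagger X F$ (again skew-Hermitian) one has
\[
  \operatorname{tr}\!\big([X,W]^\dagger[X,W']\big) = \operatorname{tr}\!\big([Y,D]^\dagger[Y,D']\big).
\]

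Next I would compute the commutators with the diagonal matrices entrywise: $[Y,D]_{jk} = \mathrm i(w_k-w_j)Y_{jk}$ and $[Y,D']_{jk} = \mathrm i(w'_k-w'_j)Y_{jk}$. Because the $w_j$ and $w'_j$ are real, $\big([Y,D]^\dagger\big)_{jk} = \overline{[Y,D]_{kj}} = \mathrm i(w_k-w_j)\overline{Y_{kj}}$, and expanding the trace of the product gives
\[
  \operatorname{tr}\!\big([Y,D]^\dagger[Y,D']\big) = \sum_{j,k=1}^N (w_k-w_j)(w'_k-w'_j)\,|Y_{jk}|^2 ,
\]
where I also used $|Y_{kj}|=|Y_{jk}|$ since $Y$ is skew-Hermitian. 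The right-hand side is manifestly real and symmetric under $W\leftrightarrow W'$, as it must be.

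Finally I would reorganize this double sum by the diagonals of $Y$. The terms with $j=k$ drop out since then $w_k-w_j=0$. For $j\neq k$ I would set $m:=k-j$ and $i:=\min(j,k)$, so that $Y_{jk}=Y_{m\backslash i}$, the other index equals $i+|m|$, and the scalar factor becomes $(w_{i+|m|}-w_i)(w'_{i+|m|}-w'_i)$ irrespective of the order of $j$ and $k$; summing $m$ over $-N,\dots,N$ (the band $m=0$ contributing nothing) and $i$ over the admissible range along the $m$-th diagonal then yields the stated identity.

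The only real work is the index bookkeeping in this last step: matching the entry sum to the paper's diagonal indexing $Y_{m\backslash i}$ with the symmetric weight $w_{i+|m|}-w_i$, and verifying that the super- and sub-diagonals of a given $|m|$ are each counted exactly once while the main diagonal drops out. Everything else is a one-line entrywise computation.
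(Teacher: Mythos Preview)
Your proposal is correct and follows exactly the paper's approach: simultaneously diagonalize $W$ and $W'$ via the common eigenbasis $F$, reduce to $\operatorname{tr}([Y,D_W]^\dagger[Y,D_{W'}])$, and then compute entrywise. The paper's own proof is in fact terser than yours --- it simply writes ``via a direct calculation'' for the final step --- so your explicit entrywise expansion and the diagonal reindexing $m=k-j$, $i=\min(j,k)$ are a faithful fleshing-out of what the paper leaves implicit.
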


\begin{proof}
    Since $[W,W']=0$, we can find a common eigenbasis $F\in \mathrm{U}(n)$ that diagonalizes them.
    Then
    \begin{equation*}
        \mathrm{tr}([X,W]^\dagger [X,W']) 
        = \mathrm{tr}([Y,D_{W}]^\dagger [Y,D_{W'}])
    \end{equation*}
    where $Y=F^\dagger X F$, $D_{W} = \mathrm i\,\mathrm{diag}(w_1,\ldots,w_n)$, and $D_{W'} = \mathrm i\,\mathrm{diag}(w'_1,\ldots,w'_n)$.
    Via a direct calculation
    \begin{equation*}
        \mathrm{tr}([Y,D_{W}]^\dagger [Y,D_{W'}]) = \sum_{m=-N}^N \sum_{i=1}^{N-|m|}\lvert Y_{m\backslash i} \rvert^2 (p_{i+|m|} - p_{i})(w_{i+|m|} - w_{i}).
    \end{equation*}\vspace{-10pt}
\end{proof}

\begin{proof}[Proof of Proposition~\ref{prop:dissipative_casimirs}]
    Along the flow of \eqref{eq:canonical dissipation}, we have
    \begin{align*}
        \frac{d}{dt} C_f^N &= \frac{d}{dt}\frac{4\pi}{N}\operatorname{tr}(f(\mathrm i W)) =
        \frac{4\pi\mathrm i}{N}\operatorname{tr}\big(f'(\mathrm i W)(\frac{1}{\hbar}[P,W] + \frac{\kappa}{\hbar^2}[P,[P,W]])\big) = \\&\phantom{=}
        -\frac{4\pi\kappa\mathrm i}{N\hbar^2}\operatorname{tr}\big( [P, f'(\mathrm i W)][P,W] \big) = \\ &\phantom{=}
        - \frac{4\pi\kappa}{N\hbar^2}\sum_{m=-N}^N \sum_{i=1}^{n-|m|}|Y_{m\backslash i}|^2\big(f'(w_{i+|m|})-f'(w_{i})\big)(w_{i+|m|}-w_{i}),
    \end{align*}
    where the last equality follows from Lemma~\ref{lem:eigendiff} for $Y = F^\dagger P F$.
    Then, from the mean value theorem, we obtain
    \begin{multline*}
        - \frac{4\pi\kappa}{N\hbar^2}\sum_{m=-N}^N \sum_{i=1}^{n-|m|}|Y_{m\backslash i}|^2\big(f'(w_{i+|m|})-f'(w_{i})\big)(w_{i+|m|}-w_{i})
        = \\ - \frac{4\pi\kappa}{N\hbar^2}\sum_{m=-N}^N \sum_{i=1}^{n-|m|}|Y_{m\backslash i}|^2 f''(\xi_{im})(w_{i+|m|}-w_{i})^2
    \end{multline*}
    for $\xi_{im} \in [w_i,w_{i+|m|}]$.
    Since $f$ is convex, $f''\geq 0$, which proves the result.
\end{proof}

\edit{

\begin{figure}
    \includegraphics{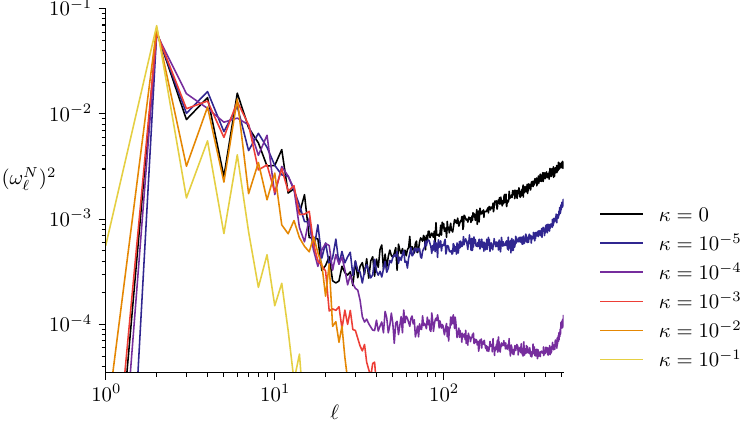}
    \caption{
        \edit{
        Visualization of the long-time spectral enstrophy components $(\omega_\ell^N)^2 = \sum_{m=-\ell}^\ell (\omega_{\ell,m}^N)^2$ in the presence of canonical dissipation of varying magnitude $\kappa$.
        The initial data is the same as in Figure~\ref{fig:initial_vorticity}, and the matrix size is $N=512$.
        The forward cascade of enstrophy is damped out.
        In order to qualitatively preserve the first part of the spectrum, the dissipation parameter must be small; for this simulation, at least $\kappa \leq 10^{-3}$.
        Notice that the canonical dissipation fails to conserve angular momenta; namely, the components with $\ell=1$ are not conserved.
        We cannot explain the growth toward the end of the spectrum for $0 < \kappa \leq 10^{-4}$; it seems to be an artifact of the quantization.
        }
    }
    \label{fig:spectrum_dissipation}
\end{figure}

\begin{remark}
    Canonical dissipation  can also be added to the continuous 2-D Euler equations, which gives the equations
    \begin{equation}\label{eq:euler_with_can_diss}
        \dot\omega + \{\omega,\psi\} = \kappa \{ \{\omega,\psi\},\psi\}, \qquad -\Delta\psi = \omega.
    \end{equation}
    The dimension of the dissipation parameter $\kappa$ is seconds $[\mathrm{s}]$.
    We can compare with kinematic viscosity $\nu$, which has the dimension of meter-squared per second $[\mathrm{m}^2 \mathrm{s}^{-1}]$.
    Notably, the canonical dissipation parameter $\kappa$ is independent of length scale~$[\mathrm{m}]$, contrary to viscosity, which dissipates small scales faster than large scales.
    Instead, how much the length scales are affected in canonical dissipation is not statically prescribed, as for kinetic viscosity, but is dynamically governed by the stream function~$\psi$.
    An example of how the long-time spectrum changes with $\kappa$ is given in Figure~\ref{fig:spectrum_dissipation}.
\end{remark}

\begin{remark}
    In the smooth setting, the initial vorticity $\omega_0$ is transported by the geodesic path of diffeomorphisms $\Phi_t(\boldsymbol{x})$ yielding the solution as $\omega_t = \omega_0 \circ \Phi_t^{-1}$.
    Geometrically, this means that the solution remains on the co-adjoint orbit of $\omega_0$, given by $\mathcal O(\omega_0) = \omega_0\circ \mathrm{SDiff}(S^2)$.
    The conservation of co-adjoint orbits prevents \emph{mixing} in the smooth topology. For example, if $\omega \in \mathcal{O}(\omega_0)$, then $\omega_0$ and $\omega$ must have the same number of critical points, so, in the smooth topology, it cannot happen that two vorticity blobs merge into one.
    Nevertheless, such mixing is in essence what we observe.
    To relax the constraints induced by the smooth topology, Shnirelman~\cite{Sh1993} suggested the $L^\infty$ weak-$*$ topology.
    The closure of the co-adjoint orbit $\mathcal O(\omega_0)$ in this weak topology allows mixing.
    Shnirelman further suggested that long-time states are given by maximally mixed states, which he called \emph{minimal flows}.
    The energy functional is continuous in the weak-$*$ topology, but the Casimirs are not.
    Indeed, mixing is characterized by decay of convex Casimir.
    Dolce and Drivas~\cite{DoDr2022a} showed that on the weak-$*$ closure of a co-adjoint orbit, intersected with a fixed energy surface, the minimization of convex Casmirs leads to minimal flows.
    Thus, we expect solutions to the 2-D Euler equations with canonical dissipation~\eqref{eq:euler_with_can_diss} to converge to minimal flows as $t\to\infty$.
\end{remark}

\begin{remark}
    The canonical dissipation term $\{\{\omega,\psi\},\psi\}$ corresponds to diffusion along the level sets of the stream function $\psi$.
    Indeed, it can be written $X_\psi[X_\psi[\omega]]$, where $X_\psi = \nabla^{\bot}\psi$ is the Hamiltonian vector field for $\psi$.
    For a fixed $\psi$, the equation
    \begin{equation*}
        \dot\omega = \{\{\omega,\psi\},\psi\}
    \end{equation*}
    therefore describes one-dimensional heat flows along the streamlines of~$\psi$, where the strength of the dissipation is determined by the magnitude of $X_\psi$ (or, equivalently, the magnitude of the gradient )
    In the limit $t\to\infty$, we thus obtain an averaging of $\omega_0$ along the streamlines.
    A direct construction of this averaging is the \emph{canonical splitting}~\cite{MoVi2022}, which decomposes the vorticity as $\omega = \omega_s + \omega_r$, where $\omega_s$ is given by averages along the streamlines, and $\omega_r$ is the residual.
    From a Lie theoretic point of view, the canonical splitting component $\omega_s$ is obtained as the projection of the vorticity function $\omega$ on the stabilizer of the stream function $\psi$.
    The components $\omega_s$ and $\omega_r$ are $L^2$ orthogonal by construction, but we also have that $\omega_r$ is $H^{-1}$ (energy norm) orthogonal to $\omega$, which reflects the energy conservation property of canonical dissipation (cf.~Proposition~\ref{prop:energy_conservation_candis}).
    In summary, canonical dissipation is an infinitesimal generator for the canonical splitting.
\end{remark}
}

\section{Near conservation of energy}

The isospectral midpoint method~\eqref{eq:isospectral_midpoint} exactly conserves Casimirs for the Euler--Zeitlin equations~\eqref{eq:euler-zeitlin}.
But it fails to exactly conserve energy.
One can view this as a consequence of the method being symplectic on the co-adjoint orbits: a method exactly conserving energy and symplecticity for a non-integrable system implies exact tracking of trajectories up to time-reparameterization \cite{ZhMa1988}.
Yet, since the co-adjoint orbits for $\mathfrak{su}(N)$ are compact for a fixed $N$, we can expect that energy is ``nearly conserved'' in the sense of backward error analysis (BEA) for symplectic integrators (cf.~Hairer, Lubich, and Wanner~\cite{HaLuWa2006} and references therein).
But there is a problem with this argument: we are interested in the behavior for large~$N$, whereas there is no control on how the estimates in BEA depend on $N$.
Therefore, BEA, as developed for finite-dimensional Hamiltonian system, cannot rigorously be used to address near conservation of energy in simulations based on the Euler--Zeitlin equations.
Numerical experiments nevertheless suggest that energy is nearly preserved independently of $N$, as illustrated in Figure~\ref{fig:energy}.

\begin{openproblem}
    Develop backward error analysis (for the isospectral midpoint method applied to the Euler--Zeitlin equations) that rigorously explains the numerically observed near conservation of energy.
\end{openproblem}

\begin{figure}
    \includegraphics{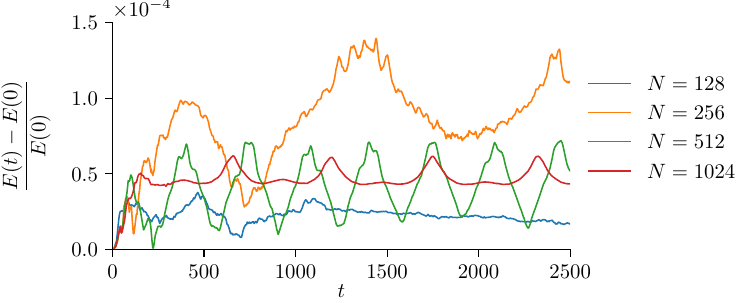}
    \caption{Evolution of the relative energy error for varying matrix sizes $N$. The initial data is the same as in Figure~\ref{fig:initial_vorticity}.
    Notice that the energy is ``nearly conserved'' in the sense compatible with the backward error analysis for symplectic integrators.
    There is no indication that the errors grow with $N$.
    We currently lack a rigorous analysis of this behavior, since backward error analysis, as developed for finite-dimensional Hamiltonian systems, prevents control over $N$.
    }
    \label{fig:energy}
\end{figure}

\section{Further topics}\label{sec:topics}
This survey focuses on the Euler--Zeitlin equations~\eqref{eq:euler-zeitlin} to approximate the 2-D Euler equations on the sphere.
But this setting is only the beginning of matrix hydrodynamics: the approach is viable for a great number of equations in mathematical physics.
The general principle is to use quantization theory to replace functions by matrices and the Poisson bracket by the (scaled) commutator.
Below we exemplify recent developments.

\subsection{Geophysical fluid dynamics}
Two-dimensional models naturally appear in geophysical fluid dynamics, via the quasi-geostrophic equation and its variants (cf.~\cite{Ze2018}).
A matrix hydrodynamics version of the global quasi-geostrophic equation was given by Franken, Caliaro, Cifani, and Geurts~\cite{FrCaCiGe2024}, where the authors provided simulations that qualitatively captured the turbulent band structures observed in the atmosphere of Jupiter.
Based on this work, further analysis of the critical latitude for jet formation was provided by the same authors~\cite{FrLuEpGu2024b}.

The underlying Riemannian geometric structure of the global quasi-geostrophic equations was specified by Modin and Suri~\cite{MoSu2025}.
This structure enables stability analysis based on sectional curvature calculations.
In particular, the study shows that the \emph{Lamb parameter} in the global quasi-geostrophic model has a stabilizing effect on the dynamics. 

The matrix hydrodynamics approach has also been extended to multi-layer quasi-geostrophic models on the sphere~\cite{FrLuEpGu2025}.
The resulting method enables long-term simulations without additional regularization, and the study highlights the importance of structure-preserving techniques for understanding large-scale geophysical flows.

Roop and Ephrati~\cite{RoEp2025} also derived a global model of thermal quasi-geostrophy on the sphere, based on the thermal rotating shallow water equations, and gave a corresponding matrix hydrodynamics discretization.
Simulations with their method reveal the formation of vorticity and buoyancy fronts, and large-scale structures in the buoyancy dynamics induced by the buoyancy-bathymetry interaction.

\subsection{Reduced magnetohydrodynamics}
Reduced magnetohydrodynamics constitute a two-dimensional system of PDEs and a simple model for both astrophysical and laboratory plasmas.
A matrix hydrodynamics model for this system, in the case of the flat torus, was given by Zeitlin~\cite{Ze2005}.
Contrary to the Euler--Zeitlin equations, the matrix flow is only partially isospectral.
An analogous model on the sphere, together with an extension of the spherical midpoint method to the underlying semi-direct Lie algebra, was given by Modin and Roop~\cite{MoRo2025}.

\subsection{Stochastic hydrodynamics}
The Euler--Zeitlin equations and the isospectral midpoint method can be extended to stochastic versions, via the addition of \emph{transport noise}, as developed by Ephrati, Jansson, Lang, and Luesink~\cite{EpJaLaLu2024}.
The authors proved almost sure preservation of Casimir functions and coadjoint orbits under  numerical flow and provided strong and weak convergence rates of the proposed method.

\subsection{Model reduction techniques}
The isospectral matrix flow formulations in matrix hydrodynamics allow for model reduction, based, for example, on the matrix eigenspaces.
Such considerations were given by Ephrati, Cifani, Viviani, and Geurts~\cite{CiEpVi2023,EpCiViGe2023} and by Pagliantini~\cite{Pa2025}.
This approach has the potential to significantly reduce the complexity of the Euler--Zeitlin equations and the isospectral midpoint methods, from $\mathcal{O}(N^3)$ to $\mathcal{O}(N^2)$, thus on par with the complexity of conventional methods such as finite elements, but retaining the structural benefits of matrix hydrodynamics.

\subsection{Axisymmetric three-dimensional Euler equations}
All the developments listed so far have been on two-dimensional domains (typically the sphere).
Indeed, the matrix hydrodynamics approach fails to directly extend to three-dimensional domains.
Essentially, this limitation stems from quantization, which relies on the symplectic structure of two-dimensional, compact manifolds, and the fact that incompressibility then implies that the fluid velocity field is Hamiltonian.
Nevertheless, the \emph{axisymmetric} 3-D Euler equations on the 3-sphere admits a matrix hydrodynamics discretization, as developed by Modin and Preston~\cite{MoPr2025}.
Numerical simulations based on this method show a faster-than-exponential growth of the maximum vorticity.
The authors also used the finite-dimensional model to study Riemannian curvature and Jacobi equations in the context of axisymmetric 3-D Euler.


\section*{Acknowledgments}
This work was supported by the Swedish Research Council (grant number 2022-03453), the Knut and Alice Wallenberg Foundation (grant numbers WAF2019.0201), and the Göran Gustafsson Foundation for Research in Natural Sciences and Medicine.
The computations were enabled through resources provided by Chalmers e-Commons at Chalmers and by the National Academic Infrastructure for Supercomputing in Sweden (NAISS), partially funded by the Swedish Research Council through grant no.~2022-06725.
The authors also thank SVeFUM for funding the workshop ``Geometric and Stochastic Methods for Fluid Models'', which eventually led to the special issue at hand.
Finally, we would like to acknowledge Sagy Ephrati for valuable discussions about canonical dissipation.










\bibliographystyle{AIMS}
\bibliography{bibliography}

\medskip
Received xxxx 20xx; revised xxxx 20xx; early access xxxx 20xx.
\medskip

\end{document}